\documentclass[12pt]{amsart}


\usepackage{amsmath}
\usepackage{amssymb}
\usepackage{bm}
\usepackage{graphicx}
\usepackage{psfrag}
\usepackage{color}
\usepackage{hyperref}
\hypersetup{colorlinks=true, linkcolor=blue, citecolor=magenta, urlcolor=wine}
\usepackage{url}
\usepackage{algpseudocode}
\usepackage{fancyhdr}
\usepackage{mathtools}
\usepackage{enumitem}
\usepackage{tikz-cd}
\usepackage{xy}
\input xy
\xyoption{all}
\usepackage{stmaryrd}
\usepackage{calrsfs}
\usepackage{wasysym}


\voffset=-1.4mm
\oddsidemargin=14pt
\evensidemargin=14pt
\topmargin=26pt
\headheight=9pt
\textheight=576pt
\textwidth=441pt 
\parskip=0pt plus 4pt


\pagestyle{fancy}
\fancyhf{}

\fancyhead[CE]{\fontsize{9}{11}\selectfont C. AGUILERA, M. GOTTI, AND A. HAMELBERG}
\fancyhead[CO]{\fontsize{9}{11}\selectfont ATOMICITY OF RECIPROCAL PUISEUX MONOIDS}
\fancyhead[LE,RO]{\thepage}
\setlength{\headheight}{9pt}


\newtheorem{theorem}{Theorem}[section]

\newtheorem{prop}[theorem]{Proposition}
\newtheorem{cor}[theorem]{Corollary}

\theoremstyle{definition}
\newtheorem{definition}[theorem]{Definition}
\newtheorem{remark}[theorem]{Remark}
\newtheorem{example}[theorem]{Example}
\numberwithin{equation}{section}


\newcommand{\nn}{\mathbb{N}}

\newcommand{\pp}{\mathbb{P}}
\newcommand{\qq}{\mathbb{Q}}
\newcommand{\rr}{\mathbb{R}}
\newcommand{\zz}{\mathbb{Z}}

\newcommand{\mtl}{\mathsf{L}}

\providecommand\ldb{\llbracket}
\providecommand\rdb{\rrbracket}

\newcommand{\lr}[1]{\langle #1\rangle}

\newcommand{\pspaces}{\!\!\!\!\!\!\!\!}

\definecolor{ocre}{RGB}{223,32,128}

\keywords{Puiseux monoids, atomic monoids, ACCP, factorizations}
\subjclass[2010]{Primary: 13A05, 13F15; Secondary: 13A15, 13G05}

\begin{document}

\mbox{}
\title{Factorizations in reciprocal Puiseux monoids}

\author{Cecilia Aguilera}
\address{Akademix \\ Cambridge \\ MA 02142}
\email{aguilera.cecilia@outlook.com}

\author{Marly Gotti}
\address{Research and Development \\ Biogen \\ Cambridge \\ MA 02142}
\email{marly.cormar@biogen.com}

\author{Andre F. Hamelberg}
\address{Department of Mathematics\\MIT\\Cambridge, MA 02139}
\email{afh@mit.edu}


\begin{abstract}
	 A Puiseux monoid is an additive submonoid of the real line consisting of rationals. We say that a Puiseux monoid is reciprocal if it can be generated by the reciprocals of the terms of a strictly increasing sequence of pairwise relatively primes positive integers. We say that a commutative and cancellative (additive) monoid is atomic if every non-invertible element $x$ can be written as a sum of irreducibles. The number of irreducibles in this sum is called a length of $x$. In this paper, we identify and investigate generalized classes of reciprocal Puiseux monoids that are atomic. Moreover, for the atomic monoids in the identified classes, we study the ascending chain condition on principal ideals and also the sets of lengths of their elements.
\end{abstract}

\maketitle

\section{Introduction}
\label{sec:intro}

A cancellative and commutative (additive) monoid $M$ is called atomic if every non-invertible element of $M$ can be expressed as a sum of irreducible elements, which are often called atoms. Although the notion of atomicity was first studied in the context of commutative rings (see, for instance, \cite{pC68} and \cite{aG74}), it is perhaps in \cite{fHK92} that F. Halter-Koch provides the first study of certain atomic concepts in the more abstract context of commutative monoids. Since then a substantial amount of papers have been devoted to the study of atomicity and related factorization properties on the setting of commutative (and also non-commutative) monoids. The book~\cite{GH06b} comprises a significant part of the study of non-unique factorizations in atomic monoids (and also in integral domains) that was carried out until 2006. A current survey on factorizations in commutative monoids by A. Geroldinger and Q. Zhong can be found in~\cite{GZ20}.
\smallskip

A \emph{Puiseux monoid} is an additive submonoid of the real line consisting of nonnegative rationals. Although Puiseux monoids have appeared in the literature only sporadically since the 1970s, the first systematic study of their atomicity started in~\cite{fG18} and have continued with a series of papers by several authors, including S. T. Chapman, F.~Gotti, H. Polo, and the second author (see the recent papers \cite{CGG21,GT22,hP21} and references therein). Although Puiseux monoids are concrete algebraic objects that are easy to define, their atomic structure is rather complex. Despite the effort of the several papers devoted to their study in the last few years, still there is no nontrivial characterization of the Puiseux monoids that are atomic (some of them, such as $\langle \frac{1}{2^n} \mid n \in \nn \rangle$, are not atomic). This explains why Puiseux monoids have been the focus of several papers recently. They have been also studied in connection with factorization of matrices~\cite{BG20} and music theory~\cite{mBA20}. Several interesting examples of Puiseux monoids have been given in~\cite{BCG21} and~\cite{GGT21}. The interested reader can find surveys on Puiseux monoids and on their generalizations to positive monoids (i.e., additive submonoids of $\rr$) in~\cite{CGG21} and \cite{CG21}, respectively.
\smallskip

In addition, Puiseux monoids are powerful tools to disprove conjectures in commutative ring theory. For instance, A. Grams in~\cite{aG74} uses a Puiseux monoid as the main ingredient to construct the first atomic domain without the ascending chain condition on principal ideals (ACCP), disproving P. Cohn's assertion that every atomic domain satisfies the ACCP \cite[Proposition~2]{pC68}. Examples of Puiseux monoids are also used by D. D. Anderson, D. F. Anderson, and M. Zafrullah in their landmark paper~\cite{AAZ90} to construct examples of integral domains satisfying certain desired factorization properties. More recently, J. Coykendall and F. Gotti have used Puiseux monoids in~\cite{CG19} to construct the first atomic monoids whose monoid algebras are not atomic, partially answering a question that R. Gilmer posed in the 1980s (see \cite{rG84}). Even more recently, F. Gotti and B. Li use Puiseux monoids to generalize the construction by Grams mentioned above and, therefore, obtain further classes of atomic domains without the ACCP \cite[Examples~3.5 and~3.6]{GL21}.
\smallskip

In this paper we continue the study of Puiseux monoids. Our main purpose here is to generalize relevant examples of Puiseux monoids that have been used in commutative ring theory, specially in the construction of needed atomic monoid domains \cite{aG74,rG84,AAZ90,CG19,fG21,GL21}. Two recurrent examples in the mentioned constructions are
\[
	\Big\langle \frac 1p \ \Big{|} \ p \in \pp \Big\rangle \quad \text{ and } \quad \Big\langle \frac{1}{2^n p_n} \ \Big{|} \ n \in \nn \Big\rangle,
\]
where $\pp$ denotes the set of primes and $(p_n)_{n \ge 1}$ is a strictly increasing sequence of odd primes. Accordingly, we say that a Puiseux monoid is reciprocal if it has a generating set of the form $\{\frac{1}{d_n} \mid n \in \nn \}$ for a strictly increasing sequence $(d_n)_{n \ge 1}$ of positive integers whose terms are pairwise relatively primes. We also consider two natural generalizations of reciprocal Puiseux monoids, which we call weak and almost reciprocal: for weak reciprocal Puiseux monoids we do not impose the relatively prime condition on the denominators and for almost reciprocal Puiseux monoids we allow the numerators of the generating sequence to be different from $1$.
\smallskip

In Section~\ref{sec:classes of reciprocal PMs}, we give several examples of both weak and almost reciprocal Puiseux monoids and we compare these new notions with the notions of bounded and strongly bounded defined in \cite{GG18} in the context of Puiseux monoids. It turns out that there is some relation between the notions we introduce here and those of boundedness introduced in~\cite{GG18}. In the same section, we also consider the atomicity of weak and almost reciprocal Puiseux monoids, and construct a new class of atomic Puiseux monoids.
\smallskip

Section~\ref{sec:ACCP} revolves around the ascending chain condition on principal ideals in the context of reciprocal Puiseux monoids. We introduce the notion of atomic decomposition, already implicit in~\cite{CGG20}, and we use this notion to find a new class of (almost reciprocal) Puiseux monoids satisfying the ACCP. We also identify a class of atomic weak reciprocal Puiseux monoids that do not satisfy the ACCP. In addition, in Section~\ref{sec:ACCP} we consider the sets of lengths of reciprocal Puiseux monoids. We determine the sets of lengths of reciprocal Puiseux monoids, showing that the elements of a reciprocal Puiseux monoid that have more than one factorization must have a factorization whose number of atoms is as large as we want. We conclude by considering reciprocal Puiseux monoids whose elements have only finitely many factorizations with any prescribed number of atoms, as this property has been recently introduced and considered by A. Geroldinger and Q. Zhong in~\cite{GZ21}.

%

\bigskip
\section{Background}
\label{sec:background}

We proceed to introduce the notation and terminology we will be using later in the paper. For related information beyond this brief background we offer in this section, the interested reader is encouraged to consult~\cite{GH06b} by A. Geroldinger and F. Halter-Koch. We let $\nn$ denote the set of positive integers, and we set $\nn_0 := \{0\} \cup \nn$. Also, let $\mathbb{P}$ denote the set of prime numbers. For a positive rational number $r = n/d$ with $n,d \in \nn$ and $\gcd(n,d) = 1$, we call $n$ the \emph{numerator} and $d$ the \emph{denominator} of $r$, and we set $\mathsf{n}(r) := n$ and $\mathsf{d}(r) := d$.
For $i,j \in \nn_0$ with  $i \le j$, we let $\llbracket i, j \rrbracket$ denote the set of integers between $i$ and $j$, that is,
\[
	\llbracket i,j \rrbracket \coloneqq \left\{ k \in \zz : i \leq k \leq j \right\}.
\]

Throughout this paper, we refer as \emph{monoid} to any cancellative and commutative semigroup with identity, and we implicitly assume that every monoid here is written additively and, therefore, its identity element is denoted by $0$. A monoid is said to be \emph{reduced} provided that its only invertible element is $0$. We tacitly assume that every monoid showing up in this paper is reduced. Let $M$ be a monoid. For $q,r \in M$, we say that $q$ \emph{divides} $r$ in $M$ and write $q \mid_M r$ if we can write $r = q + q'$ for some $q' \in M$. An element $a \in M$ is called an \emph{atom} (or \emph{irreducible}) if whenever $a = q + r$ for some $q,r \in M$, either $q = 0$ or $r = 0$. Following standard notation, we let $\mathcal{A}(M)$ denoted the set of atoms of $M$. The following definition is one of the central notions of these paper.

\begin{definition}
	A monoid $M$ is called \emph{atomic} if every element of $M$ can be written as a sum of finitely many atoms.
\end{definition}

For a subset $S$ of $M$, we let $\langle S \rangle$ denote the minimal submonoid of $M$ containing $S$, that is, the intersection of all submonoids of $M$ containing $S$. If $M = \langle S \rangle$, then $S$ is called a \emph{generating set} of~$M$. If $M$ is generated by a sequence $(q_n)_{n \ge 1}$, we simply write $M = \langle q_n \mid n \in \nn \rangle$. Since monoids in this paper are reduced, one can readily verify that for every monoid $M$ and any generating set $S$ of $M$, the inclusion $\mathcal{A}(M) \subseteq S$ holds. We will use this fact often throughout the paper.

An \emph{ideal} of $M$ is a subset $I$ of $M$ such that $I + M := \{x+q \mid x \in I \text{ and } q \in M\}$ is a subset of~$I$ (or, equivalently, $I + M = I$). An ideal $I$ is called \emph{principal} if there exists $r \in M$ such that $I = r + M := \{r+q \mid q \in M\}$. The monoid $M$ is said to satisfy the \emph{ascending chain condition on principal ideals} (\emph{ACCP}) provided that each ascending chain of principal ideals of $M$ becomes stationary. If $M$ satisfies the ACCP, then it must be atomic~\cite[Proposition 1.1.4]{GH06b}. Grams' monoid is an atomic monoid that does not satisfy the ACCP (see Example~\ref{ex:Grams' monoid} and Remark~\ref{rem:Grams monoid is not ACCP}).

Let $M$ be an atomic monoid, and let $x \in M$ be a nonzero element. If $x = a_1 + \dots + a_\ell$ for some $\ell \in \nn$ and $a_1, \dots, a_\ell \in \mathcal{A}(M)$, then we call the formal sum $z := a_1 + \dots + a_\ell$ a \emph{factorization} of $x$, and call $|z| := \ell$ the \emph{length} of such a factorization. The set of all possible factorizations of $x$ is denoted by $\mathsf{Z}_M(x)$ or simply by $\mathsf{Z}(x)$, and the set
\[
	\mathsf{L}(x) := \mathsf{L}_M(x) :=  \{|z| \, \mid z \in \mathsf{Z}(x)\}
\]
is called the \emph{set of lengths} of $x$. Also, we say that $M$ is a \emph{bounded factorization monoid} (\emph{BFM}) provided that $\mathsf{L}(x)$ is a finite set for every nonzero $x \in M$. Every BFM is known to satisfy the ACCP \cite[Corollary~1]{fHK92}. The converse does not hold in general; for instance, $\langle \frac{1}{p} \mid p \in \pp \rangle$ satisfies the ACCP but is not a BFM (Example~\ref{ex:prime reciprocal PM}). Finally,~$M$ is called a \emph{finite factorization monoid} (\emph{FFM}) provided that $\mathsf{Z}(x)$ is finite for every $x \in M$. Clearly, every FFM is a BFM. The converse does not hold in general; for instance, it follows from \cite[Proposition~4.5]{fG19a} that the monoid $\{0\} \cup \qq_{\ge 1}$ is a BFM but it is not hard to check that it is not an FFM (see \cite[Example~4.7]{AG22}). Both the bounded and the finite factorization property were introduced by Anderson, Anderson, and Zafrullah in~\cite{AAZ90} in the context of commutative ring theory, and they have been further studied as part of the sequel~\cite{AA10,AAZ92,AeA99}.

\bigskip
\section{Reciprocal Puiseux Monoids and Related Classes}
\label{sec:classes of reciprocal PMs}

For any strictly increasing sequence $(d_n)_{n \ge 1}$ of positive integers, we call the Puiseux monoid $M$ generated by the set $\big\{ \frac1{d_n} \mid n \in \nn \big\}$ the \emph{weak reciprocal} Puiseux monoid of the sequence $(d_n)_{n \ge 1}$ or, simply, a \emph{weak reciprocal} Puiseux monoid.

\begin{example} \label{ex:classic antimatter PM}
	Fix $b \in \nn$ with $b \ge 2$, and consider the Puiseux monoid defined by $M = \big\langle \frac 1{b^n} \mid n \in \nn \rangle$. It is clear that $M$ is a weak reciprocal Puiseux monoid. As $\frac{1}{b^{n+1}}$ divides $\frac{1}{b^n}$ in $M$ for every $n \in \nn$, we see that the set of atoms of $M$ is empty and, therefore, $M$ is not atomic.
\end{example}

On the other hand, there are weak reciprocal Puiseux monoids that are atomic, as the following example illustrates.

\begin{example} \label{ex:Grams' monoid}
	Let $(p_n)_{n \ge 1}$ be the strictly increasing sequence whose underlying set is $\pp \setminus \{2\}$. The Puiseux monoid $M = \langle \frac{1}{2^n p_n} \mid n \in \nn \rangle$ is a weak reciprocal Puiseux monoid. It follows from Proposition~\ref{prop:atomic PMs} below that $M$ is atomic with $\mathcal{A}(M) = \{\frac{1}{2^n p_n} \mid n \in \nn\}$. Since every generating set of $M$ must contain $\mathcal{A}(M)$, we see that $M$ is not a reciprocal Puiseux monoid. The monoid $M$ was first used by A. Grams in~\cite{aG74} to construct the first example of an atomic integral domain that does not satisfy the ACCP, correcting an assertion made by P. M. Cohn in~\cite{pC68}. Because of this, this monoid is often called that \emph{Grams' monoid}.
\end{example}

Now suppose that $(d_n)_{n \ge 1}$ is a sequence of positive integers whose terms are pairwise relatively primes. Then we call $\big\langle \frac{1}{d_n} \mid n \in \nn \big\rangle$ the \emph{reciprocal Puiseux monoid} of $(d_n)_{n \ge 1}$ or, simply, a \emph{reciprocal Puiseux monoid}. By definition, every reciprocal Puiseux monoid is weak reciprocal. Observe that none of the weak reciprocal Puiseux monoids in Examples~\ref{ex:classic antimatter PM} and~\ref{ex:Grams' monoid} is a reciprocal Puiseux monoid. Let us take a look at the prototypical example.

\begin{example} \label{ex:prime reciprocal PM}
	The monoid $\big\langle \frac 1p \mid p \in \pp \big\rangle$ is the reciprocal Puiseux monoid of the strictly increasing sequence whose underlying set is $\pp$. This additive monoids seems to be first used in \cite[Example~2.1]{AAZ90} to exhibit an example of an integral domain satisfying the ACCP that is not a BFD.
\end{example}

In addition to the sequence $(d_n)_{n \ge 1}$ defined right before Example~\ref{ex:prime reciprocal PM}, let $(c_n)_{n \ge 1}$ be a sequence of positive integers with $\gcd(c_n, d_n) = 1$ for every $n \in \nn$. Then the Puiseux monoid generated by the set $\big\{ \frac{c_n}{d_n} \mid n \in \nn \big\}$ is called the \emph{almost reciprocal} Puiseux monoid of the sequence $\big( \frac{c_n}{d_n}\big)_{n \ge 1}$ or, simply, an \emph{almost reciprocal} Puiseux monoid. It is clear that every reciprocal Puiseux monoid is almost reciprocal. It turns out that almost reciprocal Puiseux monoids are atomic, and we will prove this fact soon. Therefore the Puiseux monoid in Example~\ref{ex:classic antimatter PM} is a weak reciprocal Puiseux monoid that is not almost reciprocal. On the other hand, there are almost reciprocal Puiseux monoids that are not weak reciprocal. The following example illustrates this observation.

\begin{example} \label{ex:PM that is not weak reciprocal}
	The Puiseux monoid $M = \big\langle \frac{p+1}{p} \mid p \in \pp \big\rangle$ is almost reciprocal, and it follows from Proposition~\ref{prop:atomic PMs} that $M$ is atomic with set of atoms $\big\{ \frac{p+1}{p} \mid p \in \pp \big\}$. This, along with the fact that every generating set of $M$ must contain $\mathcal{A}(M)$, guarantees that $M$ is not (weak) reciprocal. 
\end{example}

In the following proposition we identify a class of atomic Puiseux monoids.  

\begin{prop} \label{prop:atomic PMs}
	 Let $(q_n)_{n \ge 1}$ be a sequence of positive rationals, and let $(p_n)_{\ge 1}$ be a sequence of primes such that $p_n \mid \mathsf{d}(q_n)$ but $p_n \nmid \mathsf{d}(q_k)$ for every $n \in \nn$ and $k \neq n$. Then the Puiseux monoid $M = \langle q_n \mid n \in \nn \rangle$ is atomic with $\mathcal{A}(M) = \{ q_n \mid n \in \nn \}$.
\end{prop}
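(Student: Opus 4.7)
The plan is to establish atomicity and identify the atoms in one stroke by proving that each generator $q_n$ is itself irreducible; once this is done, atomicity follows immediately because every element of $M$ is, by definition of a generating set, a finite $\nno$-linear combination of the $q_n$'s. The inclusion $\mathcal{A}(M) \subseteq \{q_n \mid n \in \nn\}$ comes for free from the standard fact recorded in Section~\ref{sec:background} that the set of atoms of a reduced monoid is contained in every generating set.

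For the reverse inclusion, I would fix $n \in \nn$ and assume $q_n = x + y$ with $x, y \in M$. Writing $x$ and $y$ as $\nno$-linear combinations of the generators and combining coefficients gives
\[
q_n \ = \ \sum_{k \in \nn} a_k \, q_k
\]
for some finitely supported sequence $(a_k)_{k \in \nn}$ in $\nno$. The key step is to evaluate the $p_n$-adic valuation $\mathsf{v}_{p_n}$ on both sides: the hypothesis guarantees $\mathsf{v}_{p_n}(q_n) \le -1$, while $\mathsf{v}_{p_n}(q_k) \ge 0$ for every $k \ne n$. If $a_n = 0$, then every nonzero term on the right has nonnegative $p_n$-adic valuation, forcing $\mathsf{v}_{p_n}$ of the right-hand side to be nonnegative and contradicting the strict negativity of the left-hand side. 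Hence $a_n \ge 1$; comparing real values in the identity $q_n = a_n q_n + \sum_{k \ne n} a_k q_k$, together with the positivity of each $q_k$, then forces $a_n = 1$ and $a_k = 0$ for every $k \ne n$. Tracing this back to the decomposition $q_n = x + y$ yields $\{x, y\} = \{0, q_n\}$, so $q_n$ is an atom.

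With each $q_n$ now known to be an atom, any nonzero $r \in M$ is already exhibited as a sum of atoms by its expansion as a nonnegative integer combination of the generators, which settles atomicity. The only mildly delicate input is the observation that a finite $\nno$-linear combination of nonnegative rationals with nonnegative $p_n$-adic valuation still has nonnegative $p_n$-adic valuation; this is routine from the standard properties of the $p$-adic valuation on $\qq$, and I foresee no substantive obstacle in carrying out the plan.
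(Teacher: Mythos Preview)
Your proposal is correct and follows essentially the same approach as the paper: both arguments show that any representation $q_n = \sum_k a_k q_k$ forces $a_n \ge 1$ by a $p_n$-divisibility obstruction, and then conclude $a_n = 1$ and $a_k = 0$ for $k \neq n$ by positivity. The only cosmetic difference is that the paper clears denominators and reasons with integer divisibility, whereas you phrase the same obstruction via the $p_n$-adic valuation; the content is identical.
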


\begin{proof}
	It suffices to prove that $\mathcal{A}(M) = \{ q_n \mid n \in \nn  \}$. For each $n \in \nn$, set $q_n = \frac{c_n}{d_n}$ for some $c_n, d_n \in \nn$ with $\gcd(c_n, d_n) = 1$. Now fix $k \in \nn$ and then write
	\begin{equation} \label{eq:identity to atomicity I}
		\frac{c_k}{d_k} = \alpha_1 \frac{c_1}{d_1} + \dots + \alpha_N \frac{c_N}{d_N}
	\end{equation}
	for some $N \in \nn_{\ge k}$ and $\alpha_1, \dots, \alpha_N \in \nn_0$. After setting $d := d_1 \cdots d_N$ and $d'_i := \frac{d}{d_i}$ for every $i \in \ldb 1,N \rdb$, we can multiply~\eqref{eq:identity to atomicity I} by $d$ to obtain the identity
	\begin{equation} \label{eq:identitty to atomicity II}
		d'_k c_k = \alpha_1 d'_1 c_1 + \dots + \alpha_N d'_N c_N.
	\end{equation}
	Observe that if $\alpha_k = 0$, then $p_k$ would divide the right-hand side of~\eqref{eq:identitty to atomicity II}, which contradicts that $\gcd(d'_k c_k, p_k) = 1$. Therefore $\alpha_k > 0$, and so we can infer from~\eqref{eq:identitty to atomicity II} that $\alpha_k = 1$ and $\alpha_i = 0$ for any $i \neq k$. Thus, $q_k = \frac{c_k}{d_k} \in \mathcal{A}(M)$. As a result, the equality $\mathcal{A}(M) = \{ q_n \mid n \in \nn \}$ holds.
\end{proof}

\begin{cor} \label{cor:almost reciprocal PMs are atomic}
	Every almost reciprocal Puiseux monoid is atomic.
\end{cor}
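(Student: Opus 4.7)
The proof should be an immediate application of Proposition~\ref{prop:atomic PMs}, which the authors clearly set up for this purpose. Let $M$ be an almost reciprocal Puiseux monoid with generating sequence $q_n = c_n/d_n$, where $\gcd(c_n, d_n) = 1$ for every $n$ and where $(d_n)_{n \ge 1}$ is a strictly increasing sequence of pairwise relatively prime positive integers. The condition $\gcd(c_n, d_n) = 1$ guarantees that each $q_n$ is already written in lowest terms, so $\mathsf{d}(q_n) = d_n$ for every $n \in \nn$.

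To invoke Proposition~\ref{prop:atomic PMs} I need, for each $n \in \nn$, a prime $p_n$ that divides $\mathsf{d}(q_n) = d_n$ but divides no $d_k$ with $k \neq n$. The construction is the obvious one: pick $p_n$ to be any prime factor of $d_n$. Pairwise relative primality of the $d_k$ then gives $p_n \nmid d_k$ for every $k \neq n$, exactly the hypothesis of the proposition. Applying Proposition~\ref{prop:atomic PMs} to the sequence $(q_n)_{n \ge 1}$ together with this sequence $(p_n)_{n \ge 1}$ yields $\mathcal{A}(M) = \{q_n \mid n \in \nn\}$; because $M$ is generated by $\{q_n \mid n \in \nn\}$, every nonzero element of $M$ is a sum of atoms and $M$ is atomic.

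The only conceivable obstacle is the degenerate possibility that some $d_n$ equals $1$, in which case that particular $d_n$ has no prime factor and Proposition~\ref{prop:atomic PMs} cannot be applied to the corresponding term of the sequence. By the strictly increasing convention this can happen only for $d_1$, and I would expect the write-up to either rule it out implicitly or to dispose of it in one line by observing that then $q_1 = c_1$ is a positive integer, after which the proposition still identifies the remaining $q_n$ (for $n \ge 2$) as atoms, and $q_1$ can be handled separately. Aside from this minor corner case, the proof is a one-line citation of the preceding proposition.
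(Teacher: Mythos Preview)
Your proposal is correct and matches the paper's approach exactly: the paper states the corollary immediately after Proposition~\ref{prop:atomic PMs} with no proof, treating it as the obvious application you describe (choose any prime factor of each $d_n$ and invoke the proposition). Your discussion of the $d_1=1$ corner case goes beyond what the paper addresses, but the main argument is precisely what the authors intend.
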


There are weak reciprocal Puiseux monoids that are atomic non-reciprocal Puiseux monoids. We identify a family of such monoids in the following proposition.

\begin{prop} \label{prop:a class of atomic weak reciprocal PMs}
	Let $(p_n)_{n \ge 1}$ be a strictly increasing sequence of prime numbers and fix some $\ell \in \nn$. The monoid $\lr{\frac{1}{p_n p_{n + \ell}} \mid n \in \nn}$ is atomic. 
\end{prop}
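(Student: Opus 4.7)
The plan is to establish that $\mathcal{A}(M) = \{q_n : n \in \nn\}$, where $q_n := \frac{1}{p_n p_{n+\ell}}$. One inclusion is free: since the set $\{q_n : n \in \nn\}$ generates $M$ and (as noted in Section~\ref{sec:background}) every generating set of a reduced monoid contains its atoms, the inclusion $\mathcal{A}(M) \subseteq \{q_n : n \in \nn\}$ is automatic. Proving the reverse inclusion exhibits $M$ as being generated by its atoms, and hence atomic. Throughout I will use the observation that the sequence $(q_n)$ is strictly decreasing, which follows from $p_i p_{i+\ell} < p_{i+1} p_{i+1+\ell}$.

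Fix $n \in \nn$ and suppose we have a factorization $q_n = \sum_{i=1}^N \alpha_i q_i$ with $\alpha_i \in \nn_0$ and $N \ge n$. My first step is to eliminate the terms with $i < n$ by a size comparison: for any such $i$ the inequality $q_i > q_n$ together with $\alpha_i q_i \le q_n$ forces $\alpha_i = 0$. My second step is an arithmetic reduction. Setting $D := \prod_{k=n}^{N+\ell} p_k$ and clearing denominators yields the integer identity
\[
\frac{D}{p_n p_{n+\ell}} = \sum_{i=n}^{N} \alpha_i \frac{D}{p_i p_{i+\ell}}.
\]
For $i \ge n$, the summand $D/(p_i p_{i+\ell})$ fails to be divisible by $p_n$ precisely when $p_n \in \{p_i, p_{i+\ell}\}$, that is, when $i = n$ or $i = n-\ell$; the range restriction $i \ge n$ together with $\ell \ge 1$ rules out the latter. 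Reducing modulo $p_n$ therefore leaves only the $i=n$ term on the right, giving $\alpha_n \equiv 1 \pmod{p_n}$ and in particular $\alpha_n \ge 1$. Since $\alpha_n q_n \le q_n$ also forces $\alpha_n \le 1$, we conclude $\alpha_n = 1$, which then forces $\alpha_i = 0$ for every $i > n$. Hence $q_n \in \mathcal{A}(M)$.

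The main subtlety—and the reason Proposition~\ref{prop:atomic PMs} does not apply off the shelf—is that for $n > \ell$ the prime $p_n$ divides both $\mathsf{d}(q_n)$ and $\mathsf{d}(q_{n-\ell})$, so $p_n$ is not a witnessing prime for $q_n$ in the sense required by that proposition. The decisive idea is that the size comparison in the first step discards exactly the offending index $i = n - \ell$ (and indeed all $i < n$), removing the single other summand that could have survived modulo $p_n$; once this preliminary truncation is in place, the arithmetic reduction runs just as in the proof of Proposition~\ref{prop:atomic PMs}.
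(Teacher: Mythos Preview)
Your proof is correct and uses the same two ingredients as the paper's argument---the size comparison $q_i > q_n$ for $i < n$, and divisibility by $p_n$ after clearing denominators---but applied in the reverse order: the paper first clears denominators (assuming the coefficient $a_k$ of $q_k$ vanishes), deduces from $p_k \mid P$ that $a_{k-\ell} \neq 0$, and only then invokes the size inequality $q_{k-\ell} > q_k$ for the contradiction. Your ordering, truncating by size before reducing modulo $p_n$, is arguably cleaner because it silently absorbs the case $n \le \ell$ (where the index $n-\ell$ does not exist), a case the paper's write-up glosses over.
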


\begin{proof}
	For simplicity of notation consider $M = \lr{\frac{1}{p_{i}p_{i + \ell}} \mid i\in \nn}$. Take $k \in \nn$ and assume, by way of contradiction, that
	\begin{equation} \label{eq:equality I}
		\frac{1}{p_{k}p_{k + \ell}} = \sum_{i \in N_k}{a_i\frac{1}{p_{i}p_{i + \ell}}}
	\end{equation}
	for some coefficients $a_i \in \nn$ such that $a_i \neq 0$ only for finitely many $i \in \nn$. Let $P = \pspaces \prod\limits_{\{i \in N_k \mid a_i \neq 0\}}{\pspaces p_i p_{i + \ell}}$. Multiplying \eqref{eq:equality I} by $P$, we obtain the following equality
	\[
		\frac{P}{p_{k}p_{k + \ell}} = \sum_{i \in N_k}{a_i \frac{P}{p_i p_{i + \ell}}}.
	\]
	The fact that $p_i p_{i + \ell} \mid P$ for all $i \in N_k$ implies that $p_{k}p_{k + \ell} \mid P$, and so $p_k \mid P$. Because $a_k = 0$, it is clear that $a_{k - \ell} \neq 0$. Therefore, $\frac 1{p_{k - \ell} p_k}$ divides $\frac 1{p_k p_{k + \ell}}$ in $M$, and as a consequence, $\frac 1{p_{k - \ell} p_k} \le \frac 1{p_k p_{k + \ell}}$, which is a clear contradiction. Thus, $\frac 1{p_k p_{k + \ell}} \in \mathcal{A}(M)$ for all $k \in \nn$, and from this we conclude that $M$ is atomic.
\end{proof}

Following~\cite{GG18}, we say that a Puiseux monoid $M$ is \emph{bounded} if $M$ can be generated by a bounded subset of rational numbers, and we say that $M$ is \emph{strongly bounded} if~$M$ has a generating set $S$ such that the set $\mathsf{n}(S)$ is bounded. It is clear from the definitions that every weak reciprocal Puiseux monoid is strongly bounded and also that every strongly bounded Puiseux monoid is bounded. Therefore we have the following chain of implications:
\begin{equation} \label{eq:boundedness and reciprocal chain}
	\textbf{reciprocal} \ \Rightarrow \ \textbf{weak reciprocal} \ \Rightarrow \ \textbf{strongly bounded} \ \Rightarrow \ \textbf{bounded}
\end{equation}
We emphasize that none of the implications in this chain is reversible. Indeed, we have already noticed that the first one is not reversible (the Grams' monoid is a weak reciprocal Puiseux monoid that is not reciprocal), and the fact that the last implication is not reversible can be inferred from Example~\ref{ex:PM that is not weak reciprocal}, which provides an example of a bounded Puiseux monoid that is not strongly bounded. Finally, the following example shows that the second implication is not reversible.

\begin{example}
	Let $(p_n)_{n \ge 1}$ be a strictly increasing sequence of primes with $p_1 > 3$, and then consider the almost reciprocal Puiseux monoid $M = \big\langle \frac{2 + (-1)^n}{p_n} \mid n \in \nn \big\rangle$. It follows from Proposition~\ref{prop:atomic PMs} that~$M$ is atomic with $\mathcal{A}(M) = \big\{ \frac{2 + (-1)^n}{p_n} \mid n \in \nn \big\}$. Therefore $M$ is not weak reciprocal.
\end{example}

Although not every strongly bounded Puiseux monoid is weak reciprocal, it follows that every strongly bounded Puiseux monoid is isomorphic to a weak reciprocal Puiseux monoid.

\begin{prop}
	Every strongly bounded Puiseux monoid is isomorphic to a weak reciprocal Puiseux monoid.
\end{prop}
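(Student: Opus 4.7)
The plan is to exhibit an explicit monoid isomorphism given by multiplication by a suitable positive rational scalar. In fact, this is essentially the only option available: any monoid isomorphism between two Puiseux monoids extends uniquely to an isomorphism of their difference groups, which are rank-one torsion-free subgroups of $\qq$, and a direct computation using common denominators shows that any such group homomorphism is multiplication by some positive rational constant. So the task reduces to finding the right scalar.

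Let $M$ be a strongly bounded Puiseux monoid, and pick a generating set $S \subset \qq_{>0}$ such that $\mathsf{n}(S)$ is bounded above by some $B \in \nn$. The key observation is that for every $s \in S$, written in lowest terms as $s = c/d$ with $c \le B$, the numerator $c$ divides $B!$. Consequently, setting $\alpha := 1/B!$, we can rewrite
\[
\alpha \cdot \frac{c}{d} \, = \, \frac{c}{B!\, d} \, = \, \frac{1}{(B!/c)\, d},
\]
so that $(B!/c)d$ is a positive integer. The map $\phi \colon M \to \alpha M$ defined by $x \mapsto \alpha x$ is an injective monoid homomorphism, and its image $N := \alpha M$ is a Puiseux monoid generated by the set $T := \{ 1/((B!/c)d) : c/d \in S \}$, which consists entirely of reciprocals of positive integers. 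Hence $\phi$ is a monoid isomorphism between $M$ and the Puiseux monoid $N = \langle T \rangle$.

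It remains to check that $N$ is weak reciprocal in the sense of the paper, i.e., that the integers appearing as denominators in $T$ can be arranged as a strictly increasing sequence $(d_n)_{n \ge 1}$. For this, let $D := \{(B!/c)d : c/d \in S\} \subseteq \nn$. Distinct elements of $S$ yield distinct elements of $D$ (since $1/((B!/c)d) = \alpha \cdot (c/d)$ and $\alpha$ is a bijection), so after removing any nonessential repetitions one enumerates $D$ in strictly increasing order and obtains $N = \langle 1/d_n : n \in \nn \rangle$.

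The main subtle point is justifying the reduction to multiplicative scaling as the only possible isomorphism, which is why I would open the proof (or a preceding remark) with that observation. After that, the construction itself is essentially a bookkeeping exercise: pick the bound $B$, multiply by $1/B!$, reindex, and verify that the resulting generating set has the required form.
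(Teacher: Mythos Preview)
Your proof is correct and follows essentially the same idea as the paper: scale $M$ by the reciprocal of a common multiple of all numerators in the generating set (you use $1/B!$, the paper uses $1/\lcm(\mathsf{n}(Q))$) so that every generator becomes a unit fraction, and then observe that the resulting monoid is weak reciprocal. Your opening paragraph on why every Puiseux-monoid isomorphism must be rational scaling is accurate but unnecessary here---you only need to \emph{exhibit} an isomorphism, not classify all of them---so you can safely drop it; conversely, your explicit check that the resulting denominators can be listed strictly increasingly is a detail the paper glosses over.
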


\begin{proof}
	Let $M$ be a strongly bounded Puiseux monoid, and let $Q := \{q_n \mid n \in \nn \}$ be a generating set of~$M$ consisting of positive rationals and satisfying that $\mathsf{n}(Q)$ is bounded. Then $\mathsf{n}(Q)$ is a nonempty finite set. Let $m$ be the least common multiple of~$\mathsf{n}(Q)$. It is clear that $\frac{1}{m}M$ is a Puiseux monoid and also that the function $M \to \frac{1}{m}M$ defined by $q \mapsto \frac{q}{m}$ is a monoid isomorphism. Finally, we see that $m^{-1}M$ is the weak reciprocal Puiseux monoid of the sequence $\big(\frac{q_n}{m} \big)_{n \ge 1}$.
\end{proof}

Unlike the second implication of the chain~\eqref{eq:boundedness and reciprocal chain}, the first and the last implications do not become equivalences up to isomorphism. The following two examples witness this observation.

\begin{example}
	Let $M$ be the Grams' monoid, as described in Example~\ref{ex:Grams' monoid}. We will verify that even though $M$ is a weak reciprocal Puiseux monoid, it is not isomorphic to any reciprocal Puiseux monoid. Every Puiseux monoid isomorphic to $M$ has the form $q M$ for some $q \in \qq_{>0}$ because, according to \cite[Proposition~3.2]{fG18}, the isomorphisms given by rational multiplication are the only isomorphisms between Puiseux monoids. It is clear that $\mathcal{A}(qM) = \big\{\frac{q}{2^n p_n} \mid n \in \nn \big\}$. Observe now that, for every $n \in \nn$ with $2^n > \mathsf{n}(q)$, it follows that
	\[
		\gcd\Big( \mathsf{d}\Big( \frac{q}{2^n p_n} \Big), \mathsf{d} \Big( \frac{q}{2^{n+1} p_{n+1}} \Big) \Big) = 2 \gcd( \mathsf{d}(q) p_n,  \mathsf{d}(q) p_{n+1}) \ge 2.
	\]
	Since every generating set of $qM$ must contain both $\frac{q}{2^n p_n}$ and $\frac{q}{2^{n+1} p_{n+1}}$, it follows that $qM$ is not a reciprocal Puiseux monoid. Hence $M$ is not isomorphic to any reciprocal Puiseux monoid.
\end{example}

\begin{example}
	Consider the Puiseux monoid $M = \big\langle \frac{p+1}{p} \mid p \in \pp \big\rangle$ of Example~\ref{ex:PM that is not weak reciprocal}. As in the previous example, we can see that any Puiseux monoid isomorphic to $M$ has the form $qM$ for some $q \in \qq_{> 0}$ and, therefore, $\mathcal{A}(qM) = \big\{ q \frac{p+1}{p} \mid p \in \pp \big\}$. However, if $(p_n)_{n \ge 1}$ is a sequence of primes such that $p_n \nmid \mathsf{n}(q)$ and $p_n > 2^n \mathsf{d}(q)$ for every $n \in \nn$, then
	\[
		\mathsf{n}\Big( q \frac{p_n + 1}{p_n} \Big) = \mathsf{n}\Big( \mathsf{n}(q) \frac{p_n + 1}{\mathsf{d}(q)} \Big) > \mathsf{n}(q) 2^n 
	\]
	for every $n \in \nn$, which implies that $qM$ is not strongly bounded. As the only Puiseux monoids isomorphic to $M$ have the form $qM$, we conclude that there are no strongly bounded Puiseux monoids in the isomorphism class of~$M$.
\end{example}

\begin{prop} \label{prop:valuation PMs are weak reciprocal}
	Let $G$ be a nontrivial additive subgroup of $\qq$. Then the following statements hold.
	\begin{enumerate}
		\item $G_{\ge 0}$ is isomorphic to a weak reciprocal Puiseux monoid.
		\smallskip
		
		\item $G_{\ge 0}$ is not isomorphic to any almost reciprocal Puiseux monoid.
	\end{enumerate}
\end{prop}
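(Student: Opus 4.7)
I will exploit the classical structure of additive subgroups of $\qq$: any nontrivial $G \le \qq$ can be written as an ascending union $G = \bigcup_{n \ge 1} \frac{1}{d_n}\zz$ for a sequence of positive integers with $d_n \mid d_{n+1}$; when $G$ is non-cyclic, the sequence can be chosen strictly increasing, while when $G$ is cyclic it reduces to a single term $G = \frac{1}{d}\zz$. Intersecting with $\qq_{\ge 0}$ then gives
\[
	G_{\ge 0} \, = \, \bigcup_{n \ge 1} \frac{1}{d_n}\nno \, = \, \Big\langle \frac{1}{d_n} \, \Big| \, n \in \nn \Big\rangle,
\]
(using $d_n \mid d_{n+1}$ to rewrite any finite sum of the generators as a single multiple of some $1/d_N$), and this presentation will drive both parts of the proof.

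\textbf{Part (1).} In the non-cyclic case, the displayed equality above exhibits $G_{\ge 0}$ literally as the weak reciprocal Puiseux monoid of $(d_n)_{n \ge 1}$, so the identity serves as the required isomorphism. The cyclic case (where $G_{\ge 0} \cong \nno$) is handled by the immediately preceding proposition, which guarantees that every strongly bounded Puiseux monoid is isomorphic to a weak reciprocal one.

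\textbf{Part (2).} My plan here is to argue by comparing atom sets. By Corollary~\ref{cor:almost reciprocal PMs are atomic} together with Proposition~\ref{prop:atomic PMs}, every almost reciprocal Puiseux monoid is atomic with countably infinite atom set (namely its defining sequence $\{c_n/d_n : n \in \nn\}$). Moreover, since any isomorphism between Puiseux monoids is multiplication by a positive rational (by \cite[Proposition~3.2]{fG18}), it induces a bijection on atom sets; hence any monoid isomorphic to an almost reciprocal Puiseux monoid must have infinitely many atoms. To finish, I show $G_{\ge 0}$ does not. If $G$ is cyclic, then $G_{\ge 0} \cong \nno$ has a single atom. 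If $G$ is non-cyclic, then $d_n \to \infty$, and for any nonzero $q \in G_{\ge 0}$ I pick $n$ with $\frac{1}{d_n} < q$ and write $q = \frac{1}{d_n} + \big(q - \frac{1}{d_n}\big)$, a sum of two nonzero elements of $G_{\ge 0}$, so $q$ is not an atom and $G_{\ge 0}$ has no atoms at all.

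\textbf{Main obstacle.} The delicate step is justifying that $(d_n)$ can be chosen strictly increasing in the non-cyclic case of part (1): this reduces to showing that $G$ is non-cyclic iff the denominator set $\{\mathsf{d}(q) : q \in G\}$ is unbounded, which then lets us build a proper ascending chain of cyclic subgroups exhausting $G$. Everything else is essentially bookkeeping once this foundational structure is in place; the cyclic case of (1) is also worth double-checking, since $\nno$ is not literally a weak reciprocal Puiseux monoid and needs to be matched with one only up to the rational-scaling isomorphism.
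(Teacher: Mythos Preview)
Your Part~(1) follows the paper's approach, but your structural claim that every nontrivial $G \le \qq$ equals $\bigcup_{n \ge 1} \frac{1}{d_n}\zz$ is not literally true: for instance $G = 2\zz[1/3]$ does not contain $1$, so no $\frac{1}{d_n}\zz$ sits inside it. What \emph{is} true (and what the paper uses, via \cite[Corollary~2.8]{rG84}) is that $G = \bigcup_{n} \zz q_n$ with $q_n/q_{n+1} \in \zz$; the paper then observes $\mathsf{n}(q_{n+1}) \mid \mathsf{n}(q_n)$, so the numerators eventually stabilize at some $m$, and $\frac{1}{m}G_{\ge 0}$ is generated by unit fractions. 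Your version amounts to having already performed this rescaling, so you should either say ``after replacing $G$ by $q^{-1}G$ for some nonzero $q \in G$, we may assume $1 \in G$'' before invoking the $\bigcup \frac{1}{d_n}\zz$ form, or else quote the more general structural result and normalize as the paper does. With that fix the argument is the same as the paper's.

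Your Part~(2) is correct and takes a genuinely different route. The paper argues by contradiction: if $G_{\ge 0}$ were isomorphic to an almost reciprocal monoid it would be atomic (Corollary~\ref{cor:almost reciprocal PMs are atomic}), but then in the presentation $G_{\ge 0} = \langle q_n \mid n \in \nn\rangle$ with $q_n \in \nn_0 q_{n+1}$ the atomicity forces the sequence $(q_n)$ to eventually be constant, making $G_{\ge 0}$ cyclic, which contradicts that almost reciprocal monoids are not finitely generated. You instead compute $\mathcal{A}(G_{\ge 0})$ directly---one atom in the cyclic case, none in the non-cyclic case---and contrast with the infinite atom set of any almost reciprocal monoid. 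Your argument is a bit more transparent because it isolates the obstruction (cardinality of the atom set) as an isomorphism invariant, whereas the paper's version packages the same divisibility observation inside a contradiction. Note that your decomposition $q = \frac{1}{d_n} + (q - \frac{1}{d_n})$ again presupposes $\frac{1}{d_n} \in G$; it is cleaner to say that a non-cyclic $G$ has no least positive element, so any $q \in G_{> 0}$ is a sum of two smaller positive elements of $G$---this avoids the normalization issue entirely.
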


\begin{proof}
	(1) By \cite[Corollary~2.8]{rG84} the group $G$ is the ascending union of a sequence of cyclic subgroups, namely, $G = \bigcup_{n \ge 1} \zz q_n$, for some sequence  of nonzero rational numbers $(q_n)_{n \ge 1}$ such that $\frac{q_n}{q_{n+1}} \in \zz$. We can assume, without loss of generality, that $q_n$ is positive for every $n \in \nn$. Therefore $G_{\ge 0}$ is the Puiseux monoid generated by the sequence $(q_n)_{n \ge 1}$, that is, $G_{\ge 0} = \langle q_n \mid n \in \nn \rangle$. For every $n \in \nn$, write $q_n = c_n q_{n+1}$ for some $c_n \in \nn$, and deduce that $\mathsf{n}(q_{n+1}) \mid \mathsf{n}(q_n)$. Thus, we can pick $N \in \nn$ such that $\mathsf{n}(q_j) = \mathsf{n}(q_k)$ for all $j,k \ge N$. Since $\nn q_j \subseteq \nn q_N$ for every $j \in \ldb 1, N-1 \rdb$, we see that $G_{\ge 0} = \langle q_n \mid n \ge N \rangle$, and so we can assume that for some $m \in \nn$, the equality $\mathsf{n}(q_n) = m$ holds for every $n \in \nn$. Now the Puiseux monoid $\frac{1}{m} G_{\ge 0}$ is a weak reciprocal Puiseux monoid, which is clearly isomorphic to $G_{\ge 0}$.
	\smallskip
	
	(2) Suppose, by way of contradiction, that $G_{\ge 0}$ is isomorphic to an almost reciprocal Puiseux monoid. Then $G_{\ge 0}$ is atomic by Corollary~\ref{cor:almost reciprocal PMs are atomic}. Write $G_{\ge 0} = \langle q_n \mid n \in \nn \rangle$ as in the previous part, where $(q_n)_{n \ge 1}$ is a sequence of positive rationals such that $\nn q_{n+1} \subseteq \nn q_n$ for every $n \in \nn$. Since $G_{\ge 0}$ is atomic and $\nn q_{n+1} \subseteq \nn q_n$ for every $n \in \nn$, there exists $N \in \nn$ such that $q_n = q_N$ for every $n \ge N$. Hence $G_{\ge 0} = \langle q_N \rangle$, which is a contradiction because almost reciprocal Puiseux monoids are not finitely generated.
\end{proof}

As a consequence of Proposition~\ref{prop:valuation PMs are weak reciprocal}, we obtain that every Puiseux monoid can be embedded into a Puiseux monoid that is weak reciprocal up to isomorphism. Moreover, we can readily observe that for every Puiseux monoid $M$ we have an embedding $M \hookrightarrow \omega(M)$, where $\omega(M)$ is the weak reciprocal Puiseux monoid obtained by replacing the numerator of every element of $M^\bullet$ by~$1$. Let us record this observation.

\begin{remark} \label{rem:PMs can be embedded into weak reciprocal PMs}
	Every Puiseux monoid can be embedded into a Puiseux monoid that is weak reciprocal.
\end{remark}

In contrast to Remark~\ref{rem:PMs can be embedded into weak reciprocal PMs}, there are Puiseux monoids that cannot be embedded into any almost reciprocal Puiseux monoid. We conclude this section with some of such examples.

\begin{example}
	Take $q \in \qq_{> 0} \setminus \nn$, 
	and consider the Puiseux monoid $M_q = \langle q^n \mid n \in \nn \rangle$. Suppose, towards a contradiction, that $M_q$ is a submonoid of an almost reciprocal Puiseux monoid $M$. Since $M$ is almost reciprocal, it is atomic by Corollary~\ref{cor:almost reciprocal PMs are atomic}, and so we can write $M = \langle r_n \mid n \in \nn \rangle$, where $\mathcal{A}(M) = \{r_n \mid n \in \nn\}$. Let $p$ be a prime divisor of $\mathsf{d}(q)$. Since $M_q \subseteq M$ and $M$ is an almost reciprocal Puiseux monoid, there exists exactly one $n \in \nn$ such that $p \mid \mathsf{d}(r_n)$. Let $m$ be the maximum exponent such that $p^m$ divides $\mathsf{d}(r_n)$. Then $p^{m+1}$ does not divide $\mathsf{d}(r)$ for any $r \in M$, which contradicts that $q^{m+1} \in M$. Thus, $M_q$ cannot be embedded into any almost reciprocal Puiseux monoid.
\end{example}

\bigskip
\section{The Ascending Chain Condition on Principal Ideals}
\label{sec:ACCP}

In this section, we introduce the notion of an atomic decomposition, which is a relaxation of that of a factorization. Let $M$ be an atomic Puiseux monoid with set of atoms $\mathcal{A}(M) = \{a_n \mid n \in \nn\}$. For each $x \in M$, we say that
\[
	x = N + \sum\limits_{i \in \nn} c_i a_i
\]
is an \emph{atomic decomposition} of $x$ provided that $N \in \nn_0$ and $c_i \in \ldb 0, \mathsf{d}(a_i) - 1 \rdb$ 
for every $i \in \nn$, where only finitely many coefficients $c_i$ are nonzero. Observe that if $1 \in \mathcal{A}(M)$, then the factorizations of $x$ are in natural bijection with the atomic decompositions of~$x$. 

\begin{definition}
	We say that an atomic Puiseux monoid $M$ has \emph{unique atomic decomposition} provided that every element of $M$ has a unique atomic decomposition.
\end{definition}

As the following example illustrates, there are atomic Puiseux monoids that do not have unique atomic decompositions.

\begin{example}
	Set $q := 2/3$, and consider the Puiseux monoid $M := \langle q^n \mid n \in \nn_0 \rangle$. It is well-known and not too difficult to argue that $M$ is an atomic monoid with $\mathcal{A}(M) = \{q^n \mid n \in \nn_0\}$ (see \cite[Proposition~4.3]{CGG21}). Since $2 = q + 3q^2$ in $M$, it follows that $M$ does not have unique atomic decomposition.
\end{example}

 It turns out that every almost reciprocal Puiseux monoid has unique atomic decomposition.

\begin{prop} \label{prop:almost reciprocal PMs have the UAD}
	Every almost reciprocal Puiseux monoid has unique atomic decomposition.
\end{prop}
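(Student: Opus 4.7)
The plan is to argue by contradiction, exploiting the pairwise coprimality of the denominators of the atoms to isolate each coefficient in any hypothetical difference between two atomic decompositions.

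Let $M = \langle c_n/d_n \mid n \in \nn \rangle$ be almost reciprocal, so by Proposition~\ref{prop:atomic PMs} its atoms are precisely the elements $a_n := c_n/d_n$, with $\mathsf{d}(a_n) = d_n$ (since $\gcd(c_n,d_n)=1$). Suppose $x \in M$ admits two atomic decompositions
\[
	x = N + \sum_{i \in \nn} \alpha_i a_i = N' + \sum_{i \in \nn} \alpha'_i a_i,
\]
with $N, N' \in \nn_0$ and $\alpha_i, \alpha'_i \in \ldb 0, d_i - 1 \rdb$ all but finitely many zero. Subtracting gives the rational identity $N' - N = \sum_i (\alpha_i - \alpha'_i)\, c_i/d_i$. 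Let $S := \{i \in \nn : \alpha_i \neq \alpha'_i\}$, which is finite. It will be enough to show $S = \emptyset$, for then the sum vanishes and $N = N'$.

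The key step, assuming for contradiction that $S \neq \emptyset$, is to multiply the displayed identity through by the finite product $D := \prod_{i \in S} d_i$ to obtain the integer equation
\[
	(N' - N)\, D \ = \ \sum_{i \in S} (\alpha_i - \alpha'_i)\, c_i \cdot \frac{D}{d_i}.
\]
Now I would fix $k \in S$ and reduce both sides modulo $d_k$. Because the $d_i$ are pairwise coprime, $d_k$ divides $D$ and divides $D/d_i$ for every $i \in S \setminus \{k\}$, so only the $k$-th summand survives, yielding
\[
	(\alpha_k - \alpha'_k)\, c_k \cdot \frac{D}{d_k} \ \equiv \ 0 \pmod{d_k}.
\]
Since $\gcd(c_k, d_k) = 1$ by hypothesis and $\gcd(D/d_k, d_k) = 1$ (the factor $D/d_k = \prod_{i \in S, i \neq k} d_i$ is a product of integers each coprime to $d_k$), I would cancel to obtain $\alpha_k \equiv \alpha'_k \pmod{d_k}$. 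Combined with the atomic-decomposition bound $\alpha_k, \alpha'_k \in \ldb 0, d_k - 1 \rdb$, which ensures $|\alpha_k - \alpha'_k| < d_k$, this forces $\alpha_k = \alpha'_k$, contradicting $k \in S$.

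There is essentially no serious obstacle here: the argument is a Chinese-remainder-style isolation, and the definition of atomic decomposition has been tailored precisely so that the congruence modulo $d_k$ upgrades to equality. The only mild bookkeeping point is that if some $d_n$ equals $1$, the bound forces $\alpha_n = \alpha'_n = 0$ automatically, so that index can never lie in $S$ and the argument goes through unchanged.
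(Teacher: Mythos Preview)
Your proof is correct and follows essentially the same approach as the paper: both clear denominators by multiplying through by a product of the relevant $d_i$'s and then use the pairwise coprimality of the $d_i$ together with $\gcd(c_k,d_k)=1$ to force $d_k \mid \alpha_k - \alpha'_k$, whence $\alpha_k = \alpha'_k$ by the range restriction $\alpha_k,\alpha'_k \in \ldb 0, d_k-1\rdb$. The only cosmetic difference is that you phrase it as a contradiction via the set $S$ of indices where the coefficients differ, whereas the paper argues directly over all indices up to some large $n$.
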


\begin{proof}
	Let $(c_n)_{n \ge 1}$ and $(d_n)_{n \ge 1}$ be two sequences of positive integers satisfying that $\gcd(c_n, d_n) = 1$ for every $n \in \nn$ and $\gcd(d_m, d_n) = 1$ for all $m,n \in \nn$ with $m \neq n$. Now consider the almost reciprocal Puiseux monoid
	\[
		M := \Big\langle \frac{c_n}{d_n} \ \Big{|} \ n \in \nn \Big\rangle.
	\]
	To argue that the Puiseux monoid $M$ has unique atomic decomposition, take $q \in M^\bullet$ and write
	\[
		q = N + \sum_{i=1}^n \alpha_i \frac{c_i}{d_i} \quad \text{ and } q = N' + \sum_{i=1}^n \alpha'_i \frac{c_i}{d_i}
	\]
	for some $N, N' \in \nn_0$ and nonnegative integer coefficients $\alpha_1, \dots, \alpha_n$ and $\alpha'_1, \dots, \alpha'_n$ with $\alpha_i, \alpha'_i \in \ldb 0, d_i - 1 \rdb$ for every $i \in \ldb 1,n \rdb$ (for some $n \in \nn$ large enough). After setting $D = d_1 \cdots d_n$ and $D_i := D/d_i$ for each $i \in \ldb 1,n \rdb$, we see that
	\begin{equation} \label{eq:UAD aux equation}
		(N - N')D = \sum_{i=1}^n (\alpha'_i - \alpha_i)D_i c_i. 
	\end{equation}
	It follows from~\eqref{eq:UAD aux equation} that, for each $j \in \ldb 1,n \rdb$, the expression $(\alpha'_j - \alpha_j)D_j c_j$ is divisible by $d_j$, and so the fact that $\gcd(d_j, D_j c_j) = 1$ ensures that $d_j \mid \alpha'_j - \alpha_j$. As a result, $\alpha'_j = \alpha_j$ for every $j \in \ldb 1,n \rdb$, and so $N' = N$. Hence every element of $M$ has a unique atomic decomposition, and the proposition follows.
\end{proof}

Unlike almost reciprocal Puiseux monoids, weak reciprocal Puiseux monoids may not have unique atomic decomposition even when they are atomic. The following example illustrates this observation.

\begin{example}
	Let $(p_n)_{n \ge 1}$ be a strictly increasing sequence of primes, and consider the Puiseux monoid
	\[
		M = \Big\langle \frac{1}{p_n p_{n+1}} \ \Big{|} \ n \in \nn \Big\rangle.
	\]
	It follows from Proposition~\ref{prop:a class of atomic weak reciprocal PMs} that the weak reciprocal Puiseux monoid $M$ is atomic with $\mathcal{A}(M) = \big\{  \frac{1}{p_n p_{n+1}} \mid n \in \nn \big\}$. Since
	\[
		\frac{1}{p_n} = p_{n-1} \frac{1}{p_{n-1} p_n} = p_{n+1} \frac{1}{p_n p_{n+1}}
	\]
	for every $n \in \nn$, we see that $M$ is an atomic weak reciprocal Puiseux monoid without unique atomic decomposition.
\end{example}

If an atomic Puiseux monoid $M$ with $\mathcal{A}(M) = \{a_n \mid n \in \nn\}$ has unique atomic decomposition, then we can define functions $\eta, \zeta_n \colon M \to \nn_0$ for each $n \in \nn$ such that, for each $q \in M$,
\begin{equation} \label{eq:atomic decomposition}
	q = \eta(q) + \sum\limits_{i \in \nn} \zeta_n(q) a_n
\end{equation}
is the atomic decomposition of $q$. It follows from the definition of the functions $\zeta_n$'s that, for each $q \in M$, the sequence $(\zeta_n(q))_{n \ge 1}$ has only finitely many nonzero terms. It turns out that every Puiseux monoid having unique atomic decomposition satisfies the ACCP.

\begin{theorem}\label{theorem:UniqueDecompositionACCP}
	Every almost reciprocal Puiseux monoid satisfies the ACCP.
\end{theorem}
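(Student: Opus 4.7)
The plan is to combine Proposition~\ref{prop:almost reciprocal PMs have the UAD} with a monotonicity argument on the data of the unique atomic decomposition. Let $M$ be an almost reciprocal Puiseux monoid with atoms $a_n = c_n/d_n$, where the $d_n$ are pairwise coprime and $\gcd(c_n,d_n)=1$. Suppose toward a contradiction that $M$ fails the ACCP. Because $M$ is reduced, a strictly ascending chain of principal ideals produces a sequence $(q_n)_{n \ge 1}$ in $M$ together with elements $r_n \in M \setminus \{0\}$ such that $q_n = q_{n+1} + r_n$ for every $n \in \nn$. The goal is to show that the sequence $(q_n)_{n \ge 1}$ must in fact be eventually constant.

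First, I would record how atomic decompositions transform under addition. Summing the atomic decompositions of $q_{n+1}$ and $r_n$ gives the raw expression
\[
	q_n = \bigl(\eta(q_{n+1}) + \eta(r_n)\bigr) + \sum_{i \in \nn} \bigl(\zeta_i(q_{n+1}) + \zeta_i(r_n)\bigr) a_i,
\]
which need not be an atomic decomposition because some coefficients can exceed $d_i - 1$. Writing $\zeta_i(q_{n+1}) + \zeta_i(r_n) = k_i^{(n)} d_i + z_i^{(n)}$ with $k_i^{(n)} \in \nn_0$ and $z_i^{(n)} \in \ldb 0, d_i - 1 \rdb$, and absorbing each carry via the integer identity $d_i a_i = c_i \in \nn$, yields a bona fide atomic decomposition of $q_n$. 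Uniqueness then forces the identifications
\[
	\eta(q_n) = \eta(q_{n+1}) + \eta(r_n) + \sum_i k_i^{(n)} c_i \qquad \text{and} \qquad \zeta_i(q_n) = z_i^{(n)}.
\]

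From the first identity, $(\eta(q_n))_{n \ge 1}$ is a non-increasing sequence in $\nn_0$, so it stabilizes past some $N \in \nn$. For every $n \ge N$, the equality $\eta(q_n) = \eta(q_{n+1})$ forces $\eta(r_n) = 0$ and $k_i^{(n)} = 0$ for all $i$ (since each $c_i > 0$). Hence $\zeta_i(q_n) = \zeta_i(q_{n+1}) + \zeta_i(r_n) \ge \zeta_i(q_{n+1})$ for every $i$, and the supports $\{i : \zeta_i(q_n) > 0\}$ form a non-increasing chain of finite subsets of $\nn$, all contained in the finite set $S := \{i : \zeta_i(q_N) > 0\}$. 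For each $i \in S$, the sequence $(\zeta_i(q_n))_{n \ge N}$ is a non-increasing sequence in $\nn_0$, hence eventually constant; since $S$ is finite, all of these sequences stabilize simultaneously past some $N' \ge N$, and then $(q_n)_{n \ge N'}$ is constant, contradicting $r_{N'} \ne 0$.

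The main obstacle is the carry bookkeeping in the second step: the raw sum of the atomic decompositions of $q_{n+1}$ and $r_n$ is not itself an atomic decomposition, so one must reshape it using $d_i a_i = c_i$ and then invoke Proposition~\ref{prop:almost reciprocal PMs have the UAD} to pin down $\eta(q_n)$ in terms of $\eta(q_{n+1})$, $\eta(r_n)$ and the carries. Once that identity is secured, the rest of the proof is a straightforward descent in $\nn_0$ combined with the finiteness of the supports.
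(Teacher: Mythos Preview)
Your proof is correct and follows essentially the same approach as the paper: both use the unique atomic decomposition from Proposition~\ref{prop:almost reciprocal PMs have the UAD} to show $(\eta(q_n))_n$ is non-increasing and hence stabilizes, then argue that the $\zeta$-data must also stabilize. The only cosmetic differences are that the paper first reduces to the reciprocal case (via the fact that submonoids of a reduced ACCP monoid are ACCP) and finishes by observing that the single scalar $\sum_i \zeta_i(q_n)$ is strictly decreasing once $\eta$ stabilizes, whereas you work directly in the almost reciprocal setting and track each coordinate $\zeta_i(q_n)$ over the finite support~$S$; your explicit carry bookkeeping is in fact a bit more detailed than the paper's.
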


\begin{proof}
	Since Puiseux monoids are reduced, it immediately follows that if a Puiseux monoid satisfies the ACCP, then each of its submonoids also satisfies the ACCP (cf.~\cite[Proposition~2.1]{aG74}). Therefore it suffices to argue the statement of the theorem for reciprocal Puiseux monoids; although this does not make our argument any easier, it will definitely simplify the notation. 
	
	Let $(d_n)_{n \ge 1}$ be a sequence of positive integers satisfying that $\gcd(d_m, d_n) = 1$ for all $m,n \in \nn$ with $m \neq n$, and consider the reciprocal Puiseux monoid
	\[
		M = \Big\langle \frac{1}{d_n} \ \Big{|} \ n \in \nn \Big\rangle.
	\]
	Assume, by way of contradiction, that there exists an ascending chain $(q_n + M)_{n \ge 1}$ of principal ideals of~$M$ that does not stabilize. After replacing $(q_n)_{n \in \nn}$ by one of its subsequences, one can further assume that $(q_n + M) \subsetneq (q_{n + 1} + M)$ for every $n \in \nn$. Since~$M$ is an almost prime reciprocal Puiseux monoid, it follows from Proposition~\ref{prop:almost reciprocal PMs have the UAD} that $M$ has the unique atomic decomposition. Then we can consider the functions $\eta, \zeta_i \colon M \to \nn_0$ defined via the identities~\eqref{eq:atomic decomposition}. Now set $r_{n+1} := q_n - q_{n + 1}$ for every $n \in \nn$. Clearly, $\eta(q_n) = \eta(q_{n+1} + r_{n+1}) \ge \eta(q_{n+1})$. Therefore there is an $m \in \nn$ such that $\eta(q_n) = \eta(q_{n + 1})$ for every $n \geq m$. This, along with the fact that $q_{n+1}$ strictly divides $q_n$ in $M$, guarantees that $\sum_{i \in \nn} \zeta_i(q_n) > \sum_{i \in \nn} \zeta_i(q_{n+1})$ for every $n \geq m$. So there exists $\ell \in \nn$ such that $\sum_{i \in \nn} \zeta_i(q_n) = 0$ for every $n \geq \ell$, which implies that the sequence $(q_n)_{n \in \nn}$ is constant from one point on. However, this contradicts that the chain of ideals $(q_n + M)_{n \ge 1}$ does not stabilize. Thus, $M$ satisfies the ACCP.
\end{proof}

Although every reciprocal Puiseux monoid satisfies the ACCP, there are atomic weak reciprocal Puiseux monoids that do not satisfy the ACCP.

\begin{remark} \label{rem:Grams monoid is not ACCP}
	It is well known that the Grams' monoid $M$ does not satisfy the ACCP as the chain of principal ideals $\big( \frac{1}{2^n} + M \big)_{n \ge 1}$ does not stabilize.
\end{remark}

In addition, the atomic weak reciprocal Puiseux monoids introduced in Proposition~\ref{prop:a class of atomic weak reciprocal PMs} do not satisfy the ACCP, as we proceed to argue.

\begin{prop} \label{prop:a class of atomic weak reciprocal PMs without the ACCP}
	Let $(p_n)_{n_ \in \nn}$ be a strictly increasing sequence of prime numbers and fix some $\ell \in \nn$. Then $\big\langle \frac{1}{p_{i}p_{i + \ell}} \mid i\in \nn \big\rangle$ is an atomic weak reciprocal Puiseux monoid that does not satisfy the ACCP.
\end{prop}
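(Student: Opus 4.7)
The atomicity assertion is exactly Proposition~\ref{prop:a class of atomic weak reciprocal PMs}, so my plan is to disprove the ACCP by exhibiting an explicit non-stabilizing ascending chain of principal ideals. The guiding observation is that in $M$ the element $\tfrac{1}{p_m}$ admits the representation $\tfrac{1}{p_m} = p_{m+\ell}\cdot\tfrac{1}{p_m p_{m+\ell}}$, so $\tfrac{1}{p_m}$ always belongs to $M$ even though it is not itself an atom. Iterating this observation along indices in an arithmetic progression of common difference $\ell$ should produce a descending sequence (in $\qq_{>0}$) of elements of $M$ in which each term divides the previous one strictly.

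Concretely, I would set $q_n := \tfrac{1}{p_{1+(n-1)\ell}}$ for each $n \in \nn$, and verify the following three points:
\begin{enumerate}
    \item $q_n \in M$, since $q_n$ is a positive integer multiple of the atom $\tfrac{1}{p_{1+(n-1)\ell}\,p_{1+n\ell}}$.
    \item $q_n - q_{n+1} \in M$, which is the crux of the argument. Here I would write
    \[
        q_n - q_{n+1} = \bigl(p_{1+n\ell} - p_{1+(n-1)\ell}\bigr)\cdot\frac{1}{p_{1+(n-1)\ell}\,p_{1+n\ell}},
    \]
    and note that the rational on the right is an atom of $M$ precisely because the two indices differ by exactly $\ell$, while the coefficient on the left is a positive integer by strict monotonicity of $(p_n)_{n\ge 1}$.
    \item $q_{n+1}\notin q_n + M$, which is immediate from $q_{n+1} < q_n$ in $\qq_{>0}$ together with $M\subseteq \qq_{\ge 0}$.
\end{enumerate}
Together, (1)--(3) yield $q_n + M \subsetneq q_{n+1} + M$ for every $n \in \nn$, so $(q_n + M)_{n\ge 1}$ is a strictly ascending chain of principal ideals and $M$ fails to satisfy the ACCP.

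The only design choice that requires any care is the spacing of the indices: choosing denominators along an arithmetic progression of common difference $\ell$ is exactly what forces the intermediate rational $\tfrac{1}{p_{1+(n-1)\ell}\,p_{1+n\ell}}$ to be an atom of $M$, which is what makes step (2) work. Any other spacing would produce a rational whose membership in $M$ is far from evident and would not directly give the desired divisibility. Beyond that, each of the three verifications above is routine.
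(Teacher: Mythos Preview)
Your proposal is correct and follows essentially the same approach as the paper: both arguments exhibit a non-stabilizing ascending chain of principal ideals of the form $\bigl(\tfrac{1}{p_m}+M\bigr)$ with $m$ running over an arithmetic progression of common difference $\ell$, using the identity $\tfrac{1}{p_m}-\tfrac{1}{p_{m+\ell}}=(p_{m+\ell}-p_m)\cdot\tfrac{1}{p_m p_{m+\ell}}\in M$. The only cosmetic difference is that the paper uses the progression $(\ell n)_{n\ge 1}$ while you use $(1+(n-1)\ell)_{n\ge 1}$.
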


\begin{proof}
	Set $M := \big\langle \frac{1}{p_n p_{n + \ell}} \mid n \in \nn \big\rangle$. We have already proved in Proposition~\ref{prop:a class of atomic weak reciprocal PMs} that $M$ is atomic. Let us argue that $M$ does not satisfy the ACCP. To do so, consider the chain of principal ideals $\big( \frac 1{p_{\ell n}} + M \big)_{n \ge 1}$. For each $n \in \nn$, we see that $p_{\ell(n+1)} - p_{\ell n} \in \nn$ and, therefore, the equality
	\[
		\frac{1}{p_{\ell n}} - \frac{p_{\ell(n+1)} - p_{\ell n}}{p_{\ell n}p_{\ell(n+1)}} = \frac{1}{p_{\ell(n+1)}}
	\]
	guarantees that $\frac 1{p_{\ell(n+1)}}$ strictly divides $\frac 1{p_{\ell n}}$ in $M$. As a consequence, $\big( \frac 1{p_{\ell n}} + M \big)_{n \ge 1}$ is an ascending chain of principal ideals of $M$ that does not stabilize. Hence $M$ does not satisfy the ACCP.
\end{proof}

\smallskip
\subsection{Sets of Lengths}

Recall that a Puiseux monoid $M$ is a BFM if it is atomic and $\mathsf{L}_M(q)$ is finite for every $q \in M$. Weak reciprocal Puiseux monoids are not BFMs

\begin{prop} \label{prop:Sets of Lengths}
	Let $M$ be an atomic Puiseux monoid.
	\begin{enumerate}
		\item If $M$ is weak reciprocal, then $|\mathsf{L}_M(1)| = \infty$, and so $M$ is not a BFM.
		\smallskip
		
		\item If $M$ is reciprocal and $q \in M$, then $|\mathsf{L}_M(q)| \in \{1,\infty\}$. More specifically, the following statements hold.
		\smallskip
		\begin{itemize}
			\item $|\mathsf{L}_M(q)| = \infty$ if $1 \mid_M q$.
			\smallskip
			
			\item $|\mathsf{L}_M(q)| = 1$ if $ \ 1 \nmid_M q$.
		\end{itemize}
	\end{enumerate}
\end{prop}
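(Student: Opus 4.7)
The plan for (1) is to exploit the representation $1 = d_n \cdot (1/d_n)$ available from each generator. Writing $M = \langle 1/d_n \mid n \in \nn\rangle$ and using atomicity to fix, for every $n$, a factorization $z_n$ of $1/d_n$ of length $k_n \ge 1$, I concatenate $d_n$ copies of $z_n$ to produce a factorization of $1$ of length $d_n k_n$. Since $(d_n)_{n \ge 1}$ is strictly increasing and each $k_n \ge 1$, these lengths are unbounded, so $|\mathsf{L}_M(1)| = \infty$ and $M$ fails to be a BFM.

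For the first bullet of (2), assume $M$ is reciprocal and $1 \mid_M q$, so $q = 1 + q'$ for some $q' \in M$. Fix a factorization $z'$ of $q'$ (or let $z'$ be empty if $q' = 0$). Proposition~\ref{prop:atomic PMs} guarantees that $1/d_n \in \mathcal{A}(M)$ for every $n$ with $d_n > 1$, a cofinite set of indices; then $z' + d_n(1/d_n)$ is a factorization of $q$ of length $|z'| + d_n$, producing infinitely many distinct lengths as $n$ varies.

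The delicate case is $1 \nmid_M q$, where I will in fact prove the stronger statement $|\mathsf{Z}_M(q)| = 1$. By Proposition~\ref{prop:almost reciprocal PMs have the UAD}, $q$ has a unique atomic decomposition $q = N + \sum_n \zeta_n(q)(1/d_n)$ with $\zeta_n(q) \in \ldb 0, d_n - 1 \rdb$. First I rule out $N \ge 1$: otherwise $q - 1 = (N-1) + \sum_n \zeta_n(q)(1/d_n) \in M$, contradicting $1 \nmid_M q$. Next, for an arbitrary factorization $q = \sum_n c_n(1/d_n)$ with $c_n \in \nn_0$, I verify $c_n < d_n$ for every $n$: if some $c_n \ge d_n$, then $d_n$ of the copies of $1/d_n$ aggregate to $1$, and removing them exhibits $q - 1 = (c_n - d_n)(1/d_n) + \sum_{m \ne n} c_m(1/d_m) \in M$, again contradicting $1 \nmid_M q$. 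Hence every factorization satisfies the atomic-decomposition bounds, and uniqueness forces $(c_n)_n = (\zeta_n(q))_n$, so $|\mathsf{Z}_M(q)| = 1$ and in particular $|\mathsf{L}_M(q)| = 1$.

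The main obstacle is this bridge between the notion of a factorization (whose multiplicities $c_n$ range freely over $\nn_0$) and that of an atomic decomposition (whose multiplicities are capped at $d_n - 1$). The hypothesis $1 \nmid_M q$ is precisely the combinatorial gate that prevents any multiplicity from reaching $d_n$, since crossing that threshold would smuggle an extra $1$ into $q$. Once this bound is established, uniqueness of the atomic decomposition closes the argument.
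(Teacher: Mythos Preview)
Your proof is correct and follows essentially the same approach as the paper's: both parts hinge on the identity $1 = d_n \cdot (1/d_n)$ to produce unbounded lengths, and in the case $1 \nmid_M q$ both reduce an arbitrary factorization to the unique atomic decomposition by observing that any coefficient $c_n \ge d_n$ would smuggle a copy of $1$ into $q$. The only minor difference is in part~(1), where the paper re-indexes so that $\mathcal{A}(M) = \{1/d_n \mid n \in \nn\}$ and reads off $d_n \in \mathsf{L}_M(1)$ directly, while you avoid assuming the generators are atoms by first factoring each $1/d_n$ into atoms of total length $k_n \ge 1$ and obtaining $d_n k_n \in \mathsf{L}_M(1)$; this is a slightly more robust phrasing of the same idea.
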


\begin{proof}
	(1) Suppose that $M$ is weak reciprocal. Because $M$ is atomic, there exists a strictly increasing sequence $(d_n)_{n \ge 1}$ such that $\mathcal{A}(M) = \big\{ \frac 1{d_n} \mid n \in \nn \big\}$. As $1 = d_n \frac{1}{d_n}$ for every $n \in \nn$, we see that $\{d_n \mid n \in \nn\} \subseteq \mathsf{L}_M(1)$, and so $|\mathsf{L}_M(1)| = \infty$. The second statement is an immediate consequence of the first one.
	\smallskip
	
	(2) Assume now that $M$ is a reciprocal Puiseux monoid, and fix $q \in M$. Write $M = \big\langle \frac{1}{d_n} \mid n \in \nn \rangle$ for some strictly increasing sequence $(d_n)_{n \ge 1}$ of positive integers such that $\gcd(d_m, d_n) = 1$ when $m \neq n$. It follows from Proposition~\ref{prop:almost reciprocal PMs have the UAD} that $M$ has unique atomic decomposition.
	\smallskip
	\begin{itemize}
		\item Assume first that $1 \mid_M q$. Then the fact that $|\mathsf{L}_M(1)| = \infty$ (by part~(1)) implies that $|\mathsf{L}_M(q)| = \infty$.
		\smallskip
		
		\item Now assume that $1 \nmid_M q$. Then $\eta(q) = 0$ in the unique atomic decomposition of $q$. As a result, there exist $\alpha_1, \dots, \alpha_n \in \nn_0$ such that $q = \sum_{k=1}^n \alpha_k \frac{1}{d_k}$, where $\alpha_k \in \ldb 0, d_k - 1 \rdb$ for every $k \in \ldb 1,n \rdb$. Since every factorization $z \in \mathsf{Z}(M)$ using at least $d_i$ copies of the atom $\frac{1}{d_i}$ for some $i \in \nn$ yields an atomic decomposition of $\pi(z)$ with $\eta(\pi(z)) \ge 1$ (here $\pi$ is the factorization homomorphism of $M$), every factorization of $q$ must be an atomic decomposition. As a result, $|\mathsf{Z}_M(q)| = 1$, which implies that $|\mathsf{L}_M(q)| = 1$.
	\end{itemize}
\end{proof}

%

As the following example indicates, the second statement of Proposition~\ref{prop:Sets of Lengths} does not hold if we replace the reciprocal condition by the weak reciprocal condition.

\begin{example}
	Let $(p_n)_{n \in \nn}$ be a strictly increasing sequence of odd primes and take $k \in \nn$. Consider the monoid
	\[
		M = \Big\langle \frac{1}{2^m p_m}, \frac{1}{p_n} \ \Big{|} \ m \in \llbracket 1, k \rrbracket\ \text{ and }\ n \in \nn_{>k} \Big\rangle.
	\]
	It follows from Proposition~\ref{prop:a class of atomic weak reciprocal PMs} that $M$ is atomic with
	\[
		\mathcal{A}(M) = \Big\{  \frac{1}{2^m p_m}, \frac{1}{p_n} \ \Big{|} \ m \in \llbracket 1, k \rrbracket\ \text{ and }\ n \in \nn_{>k} \Big\}.
	\]
	Since $\frac{1}{2} = 2^{m - 1}p_m \frac{1}{2^m p_m}$ for every $m \in \ldb 1,k \rdb$, we conclude that $|\mtl\big( \frac 12 \big) | \ge k$. On the other hand, consider an arbitrary factorization of $\frac 12$, namely,
	\begin{equation} \label{eq:atomic decomposition I}
		\frac{1}{2} = \sum_{i = 1}^{k} c_i \frac{1}{2^i p_i} + \sum_{i = k + 1}^N c_i \frac{1}{p_i}
	\end{equation}
	for some $k,N \in \nn$ with $N > k$ and $c_1, \dots, c_N \in \nn_0$. For each $i \in \ldb k+1, N \rdb$, the fact that $\frac{c_i}{p_i} \le \frac 12$ ensures that $c_i < p_i$. Then after multiplying both sides of~\eqref{eq:atomic decomposition I} by $P := 2^k \prod_{i=1}^N p_i$ and carrying out obvious algebraic manipulations, we find that $c_i = 0$ for every $i \in \ldb k+1, N \rdb$. This along with the fact that $c_i \le 2^{k-1}p_k$ for every $i \in \ldb 1, k \rdb$ allows us to conclude that $\frac{1}{2}$ has finitely many factorizations in $M$ and, therefore, that $\mathsf{L}\big( \frac 12 \big)$ is a finite set. Hence $|\mathsf{L}\big(\frac 12 \big)| \notin \{1, \infty\}$.
%
%
\end{example}

Let $M$ be an atomic monoid. For $x \in M$ and $\ell \in \nn$, we set
\[
	\mathsf{Z}(x, \ell) := \{z \in \mathsf{Z}(x) \mid |z| = \ell \}.
\]
Following Geroldinger and Zhong~\cite{GZ21}, we say that a Puiseux monoid $M$ is a \emph{length-finite-factorization monoid} (or an \emph{LFFM}) if $M$ is atomic and $\mathsf{Z}(x, \ell)$ is finite for all $x \in M$ and $\ell \in \nn$. We observe that a monoid is an FFM if and only if it is both a BFM and an LFFM. Thus, the length-finite-factorization condition is what a BFM needs to satisfy in order to be an FFM. It follows from Proposition~\ref{prop:Sets of Lengths} that no weak reciprocal Puiseux monoid is a BFM. By contrast, in~\cite{GZ21} the authors show that the reciprocal Puiseux monoid $\langle \frac{1}{p} \mid p \in \pp \rangle$ is an LFFM. We know that every BFM satisfies the ACCP. It is natural to wonder whether every LFFM satisfies the ACCP. However, the only example known of a Puiseux monoid satisfying the length-finite-factorization condition is $\langle \frac{1}{p} \mid p \in \pp \rangle$, which also satisfies the ACCP. In order to produce examples of LFFMs without the ACCP, we extend \cite[Example~2.4]{GZ21} to the class of almost reciprocal Puiseux monoids, our proof following the idea given in \cite[Example~2.4]{GZ21} by Geroldinger and Zhong.

\begin{prop} 
	Let $(q_n)_{n \ge 1}$ be a sequence of positive rationals, and let $(p_n)_{\ge 1}$ be a sequence of primes such that $p_n \mid \mathsf{d}(q_n)$ but $p_n \nmid \mathsf{d}(q_k)$ for every $n \in \nn$ and $k \neq n$. Then the Puiseux monoid $M = \langle q_n \mid n \in \nn \rangle$ is an LFFM. 
\end{prop}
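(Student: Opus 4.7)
The plan is to fix $x \in M$ and $\ell \in \nn$, and argue that $\mathsf{Z}(x,\ell)$ is finite by bounding both the support and the coefficients of any length-$\ell$ factorization of $x$. By Proposition~\ref{prop:atomic PMs} we have $\mathcal{A}(M) = \{q_n \mid n \in \nn\}$, so every $z \in \mathsf{Z}(x,\ell)$ can be written as $z = \sum_{n \ge 1} \alpha_n q_n$ with $\alpha_n \in \nn_0$ of finite support and $\sum_{n \ge 1} \alpha_n = \ell$. In particular each coefficient satisfies $\alpha_n \le \ell$, so the task reduces to showing that the set of indices $n$ with $\alpha_n > 0$ is contained in some finite subset of $\nn$ depending only on $x$ and $\ell$.

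The key idea is a $p_n$-adic valuation analysis. Observe first that the $p_n$ are pairwise distinct: if $p_m = p_n$ with $m \neq n$, then $p_n = p_m \mid \mathsf{d}(q_m)$, contradicting $p_n \nmid \mathsf{d}(q_m)$. Hence the set $B(x) := \{n \in \nn : v_{p_n}(x) < 0\}$ is finite, since only finitely many primes divide the denominator of $x$. Now fix any $n \notin B(x)$ with $\alpha_n > 0$ and isolate $\alpha_n q_n = x - \sum_{k \neq n} \alpha_k q_k$. By hypothesis $v_{p_n}(q_n) < 0$ while $v_{p_n}(q_k) \ge 0$ for every $k \neq n$, so $\sum_{k \neq n} \alpha_k q_k$ has nonnegative $p_n$-adic valuation, and $v_{p_n}(x) \ge 0$ by the choice of $n$. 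The non-archimedean inequality then forces $v_{p_n}(\alpha_n q_n) \ge 0$, whence $v_{p_n}(\alpha_n) \ge -v_{p_n}(q_n) \ge 1$ and thus $\alpha_n \ge p_n$.

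To conclude, the support of any $z \in \mathsf{Z}(x,\ell)$ lies inside the finite set $B(x) \cup \{n \in \nn : p_n \le \ell\}$, because any $n$ outside $B(x)$ appearing in the support must satisfy $p_n \le \alpha_n \le \ell$, and only finitely many of the pairwise distinct primes $p_n$ can fall at or below $\ell$. Combining this with the uniform coefficient bound $\alpha_n \le \ell$ yields only finitely many possibilities for the tuple $(\alpha_n)_{n \ge 1}$, so $\mathsf{Z}(x,\ell)$ is indeed finite, as required. The only delicate step is the valuation estimate on the term $\alpha_n q_n$; everything else is routine counting.
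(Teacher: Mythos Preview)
Your proof is correct and follows essentially the same approach as the paper: both fix $x$ and $\ell$, observe that any atom $q_n$ appearing in a length-$\ell$ factorization with $p_n \nmid \mathsf{d}(x)$ must have its coefficient divisible by $p_n$, and then combine this with the bound $\alpha_n \le \ell$ to confine the support to a finite set. Your presentation via $p$-adic valuations is slightly more explicit (and you spell out why the $p_n$ are pairwise distinct, which the paper uses tacitly when choosing the threshold $k$), but the underlying argument is the same.
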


\begin{proof}
	We have proved in Proposition~\ref{prop:atomic PMs} that $M = \langle q_n \mid n \in \nn \rangle$ is atomic with $\mathcal{A}(M) = \{ q_n \mid n \in \nn \}$. To argue that $M$ is an LFFM, fix $q \in M^\bullet$ and $\ell \in \nn$. Take $k \in \nn$ large enough so that $p_i \nmid \mathsf{d}(q)$ and $p_i > \ell$ for every $i \ge k$. Now suppose that $q$ has an $\ell$-length factorization in $M$, namely,
	\[
		q = \sum_{i=1}^n c_i q_i,
	\]
	for some $c_1, \dots, c_n \in \nn_0$ with $\sum_{i=1}^n c_i = \ell$. If $n > k$ and $i \in \ldb k+1, n \rdb$, then the fact that $p_i \nmid \mathsf{d}(q)$ guarantees that $p_i \mid c_i$, and so the fact that $p_i > \ell \ge c_i$ ensures that $c_i = 0$. Hence we can assume that $n \le k$, which immediately implies that there are only finitely many factorizations of $q$ with length~$\ell$. Hence $M$ is an LFFM.
\end{proof}

\begin{cor} \label{cor:almost reciprocal PM are LFFM}
	Every almost reciprocal Puiseux monoid is an LFFM.
\end{cor}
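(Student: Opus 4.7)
The plan is to obtain Corollary~\ref{cor:almost reciprocal PM are LFFM} as an immediate consequence of the preceding proposition. Let $M = \langle c_n/d_n \mid n \in \nn \rangle$ be an almost reciprocal Puiseux monoid, where $(c_n)_{n \ge 1}$ and $(d_n)_{n \ge 1}$ are sequences of positive integers with $\gcd(c_n, d_n) = 1$ for every $n$, and with $(d_n)_{n \ge 1}$ strictly increasing and pairwise relatively prime. Setting $q_n := c_n/d_n$, I have $\mathsf{d}(q_n) = d_n$.

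For each $n$ with $d_n \ge 2$, I would select any prime divisor $p_n$ of $d_n$. The pairwise coprimality of the $d_k$'s then yields $p_n \nmid d_k = \mathsf{d}(q_k)$ whenever $k \neq n$, while by construction $p_n \mid \mathsf{d}(q_n)$. Thus the sequences $(q_n)$ and $(p_n)$ meet exactly the hypotheses of the preceding proposition, which immediately gives that $M$ is an LFFM.

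The only minor subtlety is that, since $(d_n)$ is strictly increasing, at most one $d_n$ can equal $1$; say $d_1 = 1$, in which case $q_1 = c_1 \in \nn$ and no prime divides $\mathsf{d}(q_1)$. I would handle this edge case by first applying the preceding proposition to the submonoid $M' := \langle q_n \mid n \ge 2 \rangle$, obtaining that $M'$ is an LFFM, and then deducing the same for $M = M' + \nno c_1$: indeed, any length-$\ell$ factorization of $x \in M$ uses some number $j \in \ldb 0, \ell \rdb$ of copies of the atom $c_1$ and thereby induces a length-$(\ell - j)$ factorization of $x - j c_1$ in $M'$, so
\[
	|\mathsf{Z}_M(x, \ell)| \le \sum_{j=0}^{\ell} |\mathsf{Z}_{M'}(x - j c_1, \ell - j)| < \infty.
\]
The main step is simply extracting the prime sequence from the coprime denominators and invoking the preceding proposition; the chief (and mild) obstacle is the integer-generator edge case, handled by the reduction to $M'$ above.
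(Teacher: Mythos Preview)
Your argument is correct and is exactly the route the paper intends: the corollary is stated without proof immediately after the proposition, the implicit derivation being precisely your observation that from the pairwise coprime denominators $d_n$ one can extract a prime $p_n \mid d_n = \mathsf{d}(q_n)$ with $p_n \nmid \mathsf{d}(q_k)$ for $k \neq n$. Your treatment of the possible integer generator $q_1 = c_1$ (when $d_1 = 1$) is additional care that the paper does not spell out; the reduction via $M' = \langle q_n \mid n \ge 2 \rangle$ and the bound $|\mathsf{Z}_M(x,\ell)| \le \sum_{j=0}^{\ell} |\mathsf{Z}_{M'}(x - j c_1, \ell - j)|$ is sound, since every atom of $M$ other than possibly $c_1$ lies in $\{q_n \mid n \ge 2\} = \mathcal{A}(M')$.
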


As a special case of Corollary~\ref{cor:almost reciprocal PM are LFFM}, we see that the Grams' monoid is a Puiseux monoid satisfying the length-finite-factorization condition but not the ACCP. We finally observe that the statement resulting from replacing the almost reciprocal condition by the weak reciprocal condition in Corollary~\ref{cor:almost reciprocal PM are LFFM} is not true as we have already seen examples of weak reciprocal Puiseux monoids that are not even atomic.

\bigskip
\section*{Acknowledgments}

The authors would like to thank Felix Gotti for the extraordinary encouragement and guidance during the preparation of this paper. The third author gratefully acknowledges to be part of the MIT UROP (under the mentorship of Felix Gotti) during Fall 2020, when he started collaborating on the research project that later evolved to this paper.

\bigskip


\begin{thebibliography}{20}

%
	\bibitem{AA10} D.~D. Anderson and D.~F. Anderson: \emph{Factorization in integral domains IV}, Comm. Algebra \textbf{38} (2010) 4501--4513.

	\bibitem{AAZ90} D.~D. Anderson, D.~F. Anderson, and M.~Zafrullah: \emph{Factorization in integral domains}, J. Pure Appl. Algebra \textbf{69} (1990) 1--19.

	\bibitem{AAZ92} D.~D. Anderson, D.~F. Anderson, and M.~Zafrullah: \emph{Factorization in integral domains II}, J. Algebra \textbf{152} (1992) 78--93.

	\bibitem{AeA99} D.~F. Anderson and D.~N. El~Abidine: \emph{Factorization in integral domains III}, J. Pure Appl. Algebra \textbf{135} (1999) 107--127.

	\bibitem{AG22} D. F. Anderson and F. Gotti: \emph{Bounded and finite factorization domains}. In: Rings, Monoids, and Module Theory (Eds. A. Badawi and J. Coykendall) Springer Proceedings in Mathematics \& Statistics \textbf{382}, Singapore, 2022.
	
	\bibitem{BCG21} N. R. Baeth, S. T. Chapman, and F. Gotti: \emph{Bi-atomic classes of positive semirings}, Semigroup Forum \textbf{103} (2021) 1--23.
	
	\bibitem{BG20} N.~R. Baeth and F.~Gotti: \emph{Factorization in upper triangular matrices over information semialgebras}, J. Algebra \textbf{562} (2020) 466--496.

	\bibitem{mBA20} M. Bras-Amor\'os: \emph{Increasingly enumerable submonoids of $\rr$: music theory as a unifying theme}, Amer. Math. Monthly \textbf{127} (2020) 33--44.

	\bibitem{CGG20} S. T. Chapman, F. Gotti, and M. Gotti: \emph{Factorization invariants of Puiseux monoids generated by geometric sequences}, Comm. Algebra \textbf{48} (2020) 380--396.

	\bibitem{CGG21} S. T. Chapman, F. Gotti, and M.  Gotti: \emph{When is a Puiseux monoid atomic?}, Amer. Math. Monthly \textbf{128} (2021) 302--321.
	
	\bibitem{CG21a} S. T. Chapman and M. Gotti: \emph{Atomicity of positive monoids}. Preprint on arXiv: https://arxiv.org/pdf/2108.05561.pdf
	
	\bibitem{pC68} P. M. Cohn: \emph{Bezout rings and and their subrings}, Proc. Cambridge Philos. Soc. \textbf{64} (1968) 251--264.

	\bibitem{CG21} J. Correa-Morris and F. Gotti: \emph{On the additive structure of algebraic valuations of cyclic free semirings}, J. Pure Appl. Algebra (to appear). Preprint available on arXiv: https://arxiv.org/pdf/2008.13073.pdf

	\bibitem{CG19} J. Coykendall and F. Gotti: \emph{On the atomicity of monoid algebras}, J. Algebra \textbf{539} (2019) 138--151.

	\bibitem{GGT21} A. Geroldinger, F. Gotti, and S. Tringali: \emph{On strongly primary monoids, with a focus on Puiseux monoids}, J. Algebra \textbf{567} (2021) 310--345.

	\bibitem{GH06b} A. Geroldinger and F. Halter-Koch: \emph{Non-unique Factorizations: Algebraic, Combinatorial and Analytic Theory}, Pure and Applied Mathematics Vol. 278, Chapman \& Hall/CRC, Boca Raton, 2006.
	
	\bibitem{GZ21} A. Geroldinger and Q. Zhong: \emph{A characterization of length-factorial Krull monoids}, New York Journal of Mathematics (to appear). Preprint on arXiv: https://arxiv.org/pdf/2101.10908.pdf
	
	\bibitem{GZ20} A. Geroldinger and Q. Zhong: \emph{Factorization theory in commutative monoids}, Semigroup Forum \textbf{100} (2020) 22--51.
		
	\bibitem{rG84} R. Gilmer: \emph{Commutative Semigroup Rings}, The University of Chicago Press, Chicago, 1984.
	
	\bibitem{fG19a} F. Gotti: \emph{Increasing positive monoids of ordered fields are FF-monoids}, J. Algebra \textbf{518} (2019) 40--56.
	
	\bibitem{fG21} F. Gotti: \emph{On semigroup algebras with rational exponents}, Comm. Algebra. DOI: https://doi.org/10.1080/00927872.2021.1949018
	
	\bibitem{fG18} F. Gotti: \emph{Puiseux monoids and transfer homomorphisms}, J. Algebra \textbf{516} (2018) 95--114.
	
	\bibitem{GG18} F. Gotti and M. Gotti: \emph{Atomicity and boundedness of monotone Puiseux monoids}, Semigroup Forum \textbf{96} (2018) 536--552.
	
	\bibitem{GL21} F. Gotti and B. Li: \emph{Atomic semigroup rings and the ascending chain condition on principal ideals}. Preprint on arXiv: https://arxiv.org/pdf/2111.00170.pdf
	
	\bibitem{GT22} M. Gotti, M. M. Tirador: \emph{On the set of molecules of numerical and Puiseux monoids}. In: Rings, Monoids, and Module Theory (Eds. A. Badawi and J. Coykendall) Springer Proceedings in Mathematics \& Statistics \textbf{382}, Singapore, 2022.
	
	\bibitem{aG74} A. Grams: \emph{Atomic domains and the ascending chain condition for principal ideals}, Math. Proc. Cambridge Philos. Soc. \textbf{75} (1974) 321--329.

	\bibitem{fHK92} F. Halter-Koch: \emph{Finiteness theorems for factorizations}, Semigroup Forum \textbf{44} (1992) 112--117.
	
	\bibitem{hP21} H. Polo: \emph{On the factorization invariants of the additive structure of exponential Puiseux semirings}, J. Algebra Appl. (to appear). Preprint on arXiv: https://arxiv.org/pdf/2104.02127.pdf

\end{thebibliography}
\end{document}